\documentclass{article}
\usepackage{amsmath,amssymb}
\usepackage{amsthm}
\usepackage{txfonts}
\newtheorem{Def}{Definition}[section]

\newtheorem{Thm}[Def]{Theorem}
\newtheorem{Rem}[Def]{Remark}
\newtheorem{Prop}[Def]{Proposition}

\newtheorem{Cor}[Def]{Corollary}

\newcommand{\Sl}{\mathfrak{sl}_2}
\newcommand{\Hei}{\mathfrak{H}}
\newcommand{\LA}{\mathfrak{L}}
\newcommand{\LC}{\mathfrak{C}}
\newcommand{\Lg}{\mathfrak{g}}
\newcommand{\La}{\mathfrak{a}}

\newcommand{\Ak}{(\mathrm{A}_k)}
\newcommand{\Bk}{(\mathrm{B}_k)}
\newcommand{\Ck}{(\mathrm{C}_k)}
\newcommand{\Pk}{(\mathrm{P}_k)}
\newcommand{\Qk}{(\mathrm{Q}_k)}
\newcommand{\V}{\fontfamily{pnc}\selectfont \textit{v}}
\newcommand{\W}{\fontfamily{pnc}\selectfont \textit{w}}

\usepackage{booktabs}

\begin{document}

\title{Bernoulli-type Relations \\ in Some Noncommutative Polynomial Ring}

\author{\textbf{Shunsuke MURATA}}

\date{Institute of Mathematics, University of Tsukuba \\
Tsukuba, Ibaraki, 305-8571, Japan}
\maketitle

\thispagestyle{empty}

\begin{abstract} 
We find particular relations which we call "Bernoulli-type" in some noncommutative polynomial ring with a single nontrivial relation. More precisely, our ring is isomorphic to the universal enveloping algebra of a two-dimensional non-abelian Lie algebra. From these Bernoulli-type relations in our ring, we can obtain a representation on a certain left ideal with the Bernoulli numbers as structure constants.
\end{abstract}

\vspace{1\baselineskip}

\begin{center}
2000 Mathematics Subject Classification : 11R09, 17B35 \\
Key words : Polynomial ring, Bernoulli number, Representation, Enveloping algebra
\end{center}
\vspace{1\baselineskip}

\section{Introduction}

The Bernoulli numbers are a sequence of rational numbers with connections to many branches of mathemathics. 
Especially, they are closely related to the values of the Riemann zeta function at negative integers \cite{AK}, \cite{AT}. 
In this paper, we show a certain connection between some noncommutative polynomial ring and the Bernoulli numbers. 
We let $K[x,y]$ be a noncommutative polynomial ring in two indeterminates $x,y$ over a field $K$ of characteristic zero. 
Now, we define $I= \langle xy-yx-x \rangle$ to be the ideal of $K[x,y]$ generated by $xy-yx-x$, and let $A$ be $K[x,y]/I$, the quotient of $K[x,y]$ by $I$.
Again we use $x$, $y$ as $\bar{x} = x+I$, $\bar{y} = y +I$ respectively (if there is no confusion). 
We note that $A$ is isomorphic to the universal enveloping algebra of a two-dimensional non-abelian Lie algebra (cf. Remark \ref{cong}). 
Then, our main result is the following: 
\vspace{1\baselineskip}

\textsc{Theorem} (Bernoulli-type relations)

Let $A$ be as above.
We put 
\begin{equation*}
\W_{k,\ell} = (xy^{k}-y^{k}x)x^{\ell} \in A \quad(k \geq 1, \ell \geq 0).
\end{equation*}
Then, the following relations hold.
\begin{align*}
x\W_{k, \ell} &= \sum_{i=1}^k \binom{\,k\,}{i}\W_{k, \ell +1} \\
y\W_{k, \ell} &= \frac{k}{k+1}\W_{k+1, \ell} -
                              \sum_{i=1}^k \frac{1}{k+1} \binom{k+1}{i}B_{k+1-i}\W_{i, \ell}
\end{align*}
\hfill$\Box$
\vspace{1\baselineskip}

Here, we note that the above $B_{k+1-i}$ mean the Bernoulli numbers. 
Hence, we call the above relations 
\textquoteleft \textquoteleft Bernoulli-type relations". 
Put $W= \oplus_{m \geq 1, n \geq 0}Kx^{m}y^{n} \subseteq A$, which is a direct sum by PBW theorem. 
Then $W$ becomes a two-sided ideal of $A$. 
Using the Bernoulli-type relations, we can obtain that $W$ is generated by $\{ \W_{k,\ell} \}_{k \geq 1, \ell \geq 0}$.
We can also see that $\{ \W_{k, \ell} \}_{k \geq 1, \ell \geq 0} $ is a basis of $W$.   
\vspace{1\baselineskip}

Here we will explain Bernoulli-type relations in terms of Lie algebras. 
For the explanation, we start to explain our motivation of this study. 
We began this study with \cite{BMY} written about some factorizations in universal enveloping algebras. 
In \cite{BMY}, they deal with universal enveloping algebras of three-dimensional Lie algebras. 
Then they obtained certain general relations.
Let $\LA$ be a three-dimensional Lie algebra over $K$ and denote by $U(\LA)$ the universal  enveloping algebra of $\LA$. 
Assume that $\LA$ is generated by two elements $x,y$. 
Then, the general relations in $U(\LA)$ are given as follows: 
\begin{align*}
   &\Ak \quad yxy^k \equiv \frac{k}{k+1}xy^{k+1} + \frac{1}{k+1}y^{k+1}x  \quad  ( \mathrm{mod} U_k) , \\
   &\Bk \quad y^kxy \equiv \frac{1}{k+1}xy^{k+1} + \frac{k}{k+1}y^{k+1}x  \quad  ( \mathrm{mod} U_k) , \\
   &\Ck \quad yU_k \subseteq U_{k+1} , \; U_ky \subseteq U_{k+1} , \text{where} \\
   &\quad U_k = \displaystyle \sum_{0 \le m \le k} (K xy^m + K y^mx + K y^m ) \quad (k \ge 0).
\end{align*}
\hfill$\Box$

Then, the remainder terms, $u=\sum_{1 \le p,q,r \le k} a_{p} xy^{p}+ b_{q} y^{q}x + c_{r} y^{r}+ dx \in U_k$ with $a_{p}, b_{q}, c_{r}, d  \in K $, of $\Ak$, $\Bk$ are determined according to the generators $x,y$ and types of $\LA$. 
In the paper \cite{BMY}, they determine some exact terms of $u$ along with a classification of $\LA$ in Jacobson's book \cite{Ja}. 

Here we roughly introduce the classification. 
We put $\LA = Ke \oplus Kf \oplus Kg$ with its basis $(e,f,g)$. 
Let $\LA'$ be the derived ideal of $\LA$ and $\LC$ be the center of $\LA$. 
Then the classification is roughly given as follows: 
\vspace{1\baselineskip}

(a) If $\LA'=0$, $\LA$ is abelian. 
\vspace{1\baselineskip}

(b) If dim$\LA' = 1$ and $\LA' \subseteq \LC$, the multiplication table of the basis is
$$[e,f]=g, \; [e,g]=[f,g]=0 .$$ 

(c) If dim$\LA' = 1$ and $\LA' \nsubseteq \LC$, the multiplication table of the basis is
$$[e,f]=e, \; [e,g]=[f,g]=0 .$$ 

(d) If dim$\LA' = 2$, the multiplication tables of the basis are
\begin{align*}
&\text{(d)-($\alpha$)} \quad [e,f]=0, \; [e,g]= e, \; [f,g]= \alpha f, \\
&\text{(d)-($+$) } \quad [e,f]=0, \; [e,g]=e+f, \;  [f,g]=f,
\end{align*}
\quad \; where $\alpha$ in $K^{\times}$. Different choice of $\alpha$ give different algebras unless $\alpha \alpha' =1$ . 
\vspace{1\baselineskip}

(e) dim$\LA' = 3$, the multiplication table of the basis is
$$[e,f]=g, \; [g,e]=2e, \; [g,f]=-2f.$$ 

\vspace{1\baselineskip}

In the type (d) or (e), we suppose that $K$ is algebraically closed (just for our rough explanation). 
As is well-known, the type (b) means a Heisenberg Lie algebra $\Hei_K$ and the type (e) means a special linear Lie algebra $\Sl(K)$. In the paper \cite{BMY}, they determined the exact terms of $u$ for $\Hei_F$ or $\Sl(F)$ with the above generators $e, f$ including the case if $F$ is a field of characteristic zero. 
They also showed that $\LA$ can not be two generated if $\LA$ is the type (a) or the type (d)-($\alpha = 1$). 
Hence, we were interested in determining the terms in $U_k$ for the remaining type of $\LA$. 
For our purpose, we explain some results in the author's master thesis \cite{Mu}. 
Since the paper is written in Japanese, we introduce its summary here.  
In \cite{Mu}, we obtained some properties between $u$ and the types of $\LA$, and determined the exact terms of $u$ if $\LA$ is the type (d)-($+$). 
The properties between $u$ and the types of $\LA$ are given as follows:
\begin{itemize}
\item We always have $u=0$ inspite of generators if $\LA$ is the type (b).

\item We always have $u \neq 0$ inspite of generators if $\LA$ is the type (e).

\item We can get $u=0$ according to some special generators if $\LA$ is the type (c) or (d). 
 (It means that we can also get $u \neq 0$ according to another generators.)
\end{itemize}

The exact terms of $u$ are determined  if $\LA$ is the type (d)-($+$) with the generators $e$ and $g$. 
The formulas in $U(\LA)$ are given as follows: 
\begin{align*}
\Pk \quad &geg^{k} = \frac{k}{k+1}eg^{k+1} + \frac{1}{k+1}g^{k+1}e  
      -eg^k + \frac{1}{k+1} \sum_{i=0}^k  \binom{\,k+1\,}{i}g^ie,  \\
\Qk \quad &g^{k}eg = \frac{1}{k+1}eg^{k+1} + \frac{k}{k+1}g^{k+1}e 
     + \frac{1}{k+1} \sum_{i=0}^k (-1)^{k+1-i}\binom{\,k+1\,}{i}eg^{i} + g^{k}e. 
\end{align*}
\hfill$\Box$
\vspace{1\baselineskip}

These are the almost all results written in \cite{Mu}. 
After we obtained these results, we could establish the formulas if $\LA$ is the type (c) with the generators $e+g$ and $f+g$ in the above classification. 
Then we noticed that our formulas can be reduced to the two-dimensional case. 
That is, we put $L= Kx \oplus Ky$ as a two-dimensional Lie algebra satisfying $[x,y]=x$ and denote by $U(L)$ the universal enveloping algebra of $L$.
Then, the formulas in $U(L)$ are given as follows: 
\vspace{1\baselineskip}

\begin{align*}
\Pk \quad yxy^k &= \frac{k}{k+1} \, xy^{k+1} + \frac{1}{k+1} \, y^{k+1}x \\
  & \quad -  \frac{1}{k+1} \sum_{i=1}^{k} \, \binom{\,k+1 \,}{i} B_{k+1-i} \, xy^{i}
           +  \frac{1}{k+1} \sum_{i=1}^{k}  \, \binom{\,k+1 \,}{i} B_{k+1-i} \, y^{i}x, \\
\Qk \quad y^kxy &= \frac{1}{k+1} \, xy^{k+1} + \frac{k}{k+1} \, y^{k+1}x \\
 & \quad + \frac{1}{k+1} \sum_{i=1}^{k} (-1)^{k+1-i} \, \binom{\, k+1 \,}{i} B_{k+1-i} \, xy^{i}\\ 
 & \quad - \frac{1}{k+1} \sum_{i=1}^{k} (-1)^{k+1-i} \, \binom{\, k+1 \,}{i} B_{k+1-i} \, y^{i}x.
\end{align*}
\hfill$\Box$
\vspace{1\baselineskip}

At first these formulas were shown without the Bernoulli-type relations. 
But using the Bernoulli-type relations, we can easily show the formulas. 
Therefore, we use the Bernoulli-type relations to show the formulas in this paper. 
\vspace{1\baselineskip}

We will review the Bernoulli numbers $B_n$ with $B_1=1/2$ in Section 2. 
In Section 3, we will show the Bernoulli-type relations and study $W$ introduced before. 
In Section 4, we will show the above formulas and explain a connection to Lie algebras. 
We also mention that $U(L)$ is isomorphic to $A$, and that $A = \oplus_{m \geq 1, n \geq 0} Kx^{m}y{n}$ by PBW theorem. 
\vspace{1\baselineskip}

\section{Preliminaries}

In this paper, $K$ is a field of characteristic zero. 
We denote a left hand side (resp: right hand side) by (LHS) (resp: (RHS)). 
We also denote by $B_n$ the Bernoulli numbers.

In this section, we review the Bernoulli numbers with $B_1=1/2$. 
We aim a self-contained explanation in this paper. 
Thus we confirm our setting here. 

\begin{Def}\label{BN} $(\mathrm{The \, Bernoulli \, numbers})$

We define the Bernoulli numbers $B_n$ recursively as follows:
\begin{equation*}
\sum_{i=0}^{n}\binom{n+1}{i}B_{i} = n+1 .
\end{equation*}
\hfill$\Box$
\end{Def}

\begin{Rem}\
In general, the Bernoulli numbers are also given by a generating function. 
The generating function in our condition is given as follows: 
\begin{equation*}
\frac{te^t}{e^t-1} = \sum_{n=0}^{\infty}B_n\frac{t^n}{n!}.
\end{equation*}
\hfill$\Box$
\end{Rem}

Here we describe the Bernoulli numbers up to $n=10$.  
\begin{center}
Figure.1 \; The Bernoulli numbers
 \begin{tabular}{cccccccccccc} \toprule
   $n$&$0$&$1$&$2$&$3$&$4$&$5$&$6$&$7$&$8$&$9$&$10$ \\ \midrule
   $B_n$&$1$&$\frac{1}{2}$&$\frac{1}{6}$&$0$&$-\frac{1}{30}$&$0$&$\frac{1}{42}$&$0$&$-\frac{1}{30}$&$0$&$\frac{5}{66}$ \\ \bottomrule
 \end{tabular}
\end{center}
\vspace{1\baselineskip}

As well known, there are the other type of the Bernoulli numbers. 
If we denote by $\hat{B}_n$ the Bernoulli numbers with $\hat{B}_1=-1/2$, 
then $\hat{B}_n$ are given by $(-1)^{n}B_n$ for $n \geq 0$. 

\vspace{1\baselineskip}

\begin{Rem}
\; In the first half of eighteenth century, the Bernoulli numbers were discoverd around the same time by Jacob Bernoulli and Kowa Seki independently. At first, both Bernoulli and Seki took $B_1=1/2$. Hence, historically, our definition is an original version.
\hfill$\Box$
\end{Rem}
\vspace{1\baselineskip}

\section{Bernoulli-type relations and the ideal $W$ }

In this section, we show the main theorem and some corollaries. 
Now, we set $A = K[x,y]/I$, where $K[x,y]$ is a noncommutative polynomial ring in two indeterminates $x,y$ and $I = \langle xy-yx-x \rangle$ is the two-sided ideal of $K[x,y]$ generated by $xy-yx-x$. 
At first, we confirm several elementary formulas for proving the main theorem.

\begin{Prop} \label{prop}
$\mathrm{(i)}$ 
For integers $k \geq i \geq j \geq 0$, we have
\begin{equation*}
\binom{\, k \,}{i} \binom{\, i \,}{j} = \binom{\, k \,}{j} \binom{\, k-j \, }{i-j}.
\end{equation*}

$\mathrm{(ii)}$
Let $A$ be as above. Then the following formula holds.
\begin{equation*}
xy^k = \sum_{i=0}^k \binom{\,k\,}{i} y^ix
\end{equation*}

$\mathrm{(iii)}$
Let $A$ be as above. Then the following formula holds.
\begin{equation*}
y^kx = \sum_{i=0}^k (-1)^{k-i} \binom{\,k\,}{i} xy^i
\end{equation*}

\end{Prop}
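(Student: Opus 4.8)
The plan is to treat the three parts separately, observing that (i) is a purely combinatorial identity, whereas (ii) and (iii) are short computations in $A$ driven entirely by the single relation $xy = yx + x$.

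For part (i) I would simply expand both sides in factorials. Writing $\binom{k}{i}\binom{i}{j} = \frac{k!}{(k-i)!\,j!\,(i-j)!}$ and $\binom{k}{j}\binom{k-j}{i-j} = \frac{k!}{j!\,(i-j)!\,(k-i)!}$, the two expressions are visibly equal; the hypotheses $k \ge i \ge j \ge 0$ guarantee that every factorial is of a nonnegative integer, so the manipulation is legitimate. Equivalently, both sides count the chains $B \subseteq A \subseteq \{1,\dots,k\}$ with $|B| = j$ and $|A| = i$, which gives a one-line bijective proof.

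For part (ii) the key observation is to rewrite the defining relation as $xy = (y+1)x$. Since $1$ and $y$ lie in the commutative subalgebra $K[y] \subseteq A$, I can push the single letter $x$ to the right through each factor of $y$: applying $xy = (y+1)x$ repeatedly yields $xy^k = (y+1)^k x$. Because $y$ commutes with $1$, the ordinary binomial theorem applies to $(y+1)^k$ inside $K[y]$, giving $(y+1)^k = \sum_{i=0}^k \binom{k}{i} y^i$, and multiplying on the right by $x$ produces the claim. Alternatively one can argue by induction on $k$, using $y^i(xy) = y^{i+1}x + y^i x$ in the inductive step and then collapsing the two resulting sums with Pascal's rule; either route is routine.

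Part (iii) is entirely parallel. From $xy = yx + x$ I would instead isolate $yx = x(y-1)$, then iterate to obtain $y^k x = x(y-1)^k$; expanding $(y-1)^k = \sum_{i=0}^k (-1)^{k-i}\binom{k}{i}y^i$ in $K[y]$ and multiplying on the left by $x$ gives the stated formula. One can also recover (iii) from (ii) by regarding the two triangular relations as mutually inverse change-of-basis formulas between $\{xy^i\}$ and $\{y^i x\}$, but the direct computation is shorter. The only point requiring care, and it is a minor one, is the noncommutativity: one must never commute $x$ past $y$ freely, and the binomial expansions are permissible precisely because they take place inside the commutative subring $K[y]$ with the lone factor of $x$ carried along untouched. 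I expect no genuine obstacle, since the whole proposition reduces to a single rewriting of the relation together with the binomial theorem.
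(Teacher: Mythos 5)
Your proposal is correct and follows essentially the same route as the paper: part (i) by expanding both sides in factorials, and parts (ii) and (iii) by rewriting the relation as $xy=(y+1)x$ and $yx=x(y-1)$ respectively, iterating to get $xy^k=(y+1)^kx$ and $y^kx=x(y-1)^k$, and then applying the binomial theorem inside $K[y]$. The alternative arguments you sketch (the bijective proof of (i), the inductive proof of (ii)) are fine but not needed; note also that your form $xy=(y+1)x$ is the correct one, whereas the paper's line ``$xy=yx+x=y(x+1)$'' contains a typo that its own subsequent computation silently corrects.
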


\begin{proof}
$\mathrm{(i)}$ 
We can calculate
\begin{align*}
\binom{\,k\,}{i} \binom{\, i \,}{j} &= \frac{k!}{(k-i)!i!}\, \frac{i!}{(i-j)!j!} \\
                                                &= \frac{k!}{(k-j)!j!}\, \frac{(k-j)!}{\{(k-j)-(i-j)\}!(i-j)!} \\
                                                &= \binom{\,k\,}{j} \binom{\, k-j \,}{i-j} .
\end{align*}

$\mathrm{(ii)}$ 
Since $xy= yx+x=y(x+1)$, we can calculate
\begin{align*}
xy^k &= (yx+x)y^{k-1} 
          = (y+1)xy^{k-1} \\
        &= \dots \\
        &= (y+1)^{k}x 
          = \sum_{i=0}^{k} \binom{\,k\,}{i} y^{i}x . \\
\end{align*}

$\mathrm{(iii)}$ 
Since $yx=xy-x=x(y-1)$, we can calculate
\begin{align*}
y^kx &= y^{k-1}(xy-x) 
          = y^{k-1}x(y-1) \\
        &= \dots \\
        &= x(y-1)^{k} 
          = \sum_{i=0}^{k}(-1)^{k-i} \binom{\,k\,}{i} xy^{i} . \\
\end{align*}

Therefore, we obtain the desired results.
\end{proof}

Now, we prove the main theorem.

\begin{Thm}\label{thm}

Let $A$ be as above.
We take 
\begin{equation*}
\W_{k,\ell} = (xy^{k}-y^{k}x)x^{\ell} \in A  \quad (k \geq 1, \ell \geq 0).
\end{equation*}
Then, the following relations hold.
\begin{align*}
(BR1) \quad x\W_{k, \ell} &= \sum_{i=1}^k \binom{\,k\,}{i}\W_{k, \ell +1} \\
(BR2) \quad y\W_{k, \ell} &= \frac{k}{k+1}\W_{k+1, \ell} -
                              \frac{1}{k+1} \sum_{i=1}^k \binom{\,k+1\,}{i}B_{k+1-i}\W_{i, \ell}
\end{align*}
\end{Thm}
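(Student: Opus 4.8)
The plan is to reduce both relations to polynomial identities in the single variable $y$ by first normalizing $\W_{k,\ell}$. Starting from the defining relation $xy=yx+x$, i.e.\ $xy=(y+1)x$, one obtains $xf(y)=f(y+1)x$ for every $f\in K[y]$, and in particular Proposition \ref{prop}(ii) reads $xy^k=(y+1)^kx$. Hence
\begin{equation*}
\W_{k,\ell}=(xy^k-y^kx)x^\ell=\bigl((y+1)^k-y^k\bigr)x^{\ell+1}.
\end{equation*}
Since the monomials $g(y)x^m$ (with $g\in K[y]$, $m\ge0$) are linearly independent in $A$ by the PBW theorem, it will suffice to check each asserted identity after stripping off the common power of $x$ and comparing the resulting polynomials in $y$.

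For $(BR1)$ I would move the extra $x$ to the right via $xg(y)=g(y+1)x$, giving $x\W_{k,\ell}=\bigl((y+2)^k-(y+1)^k\bigr)x^{\ell+2}$, while the right-hand side equals $\sum_{i=1}^{k}\binom{k}{i}\bigl((y+1)^i-y^i\bigr)x^{\ell+2}$. Thus $(BR1)$ amounts to $(y+2)^k-(y+1)^k=\sum_{i=1}^{k}\binom{k}{i}\bigl((y+1)^i-y^i\bigr)$; the $i=0$ term vanishes, so the two sums on the right assemble into $((y+1)+1)^k$ and $(y+1)^k$ by the binomial theorem, and the identity follows immediately.

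For $(BR2)$, applying $y$ on the left and cancelling $x^{\ell+1}$ turns the claim into
\begin{align*}
y(y+1)^k-y^{k+1}&=\frac{k}{k+1}\bigl((y+1)^{k+1}-y^{k+1}\bigr)\\
&\quad-\frac{1}{k+1}\sum_{i=1}^{k}\binom{k+1}{i}B_{k+1-i}\bigl((y+1)^i-y^i\bigr).
\end{align*}
Writing $p_i(y)=(y+1)^i-y^i$ and using $y(y+1)^k-y^{k+1}=p_{k+1}(y)-(y+1)^k$, a short rearrangement shows this is equivalent to the single clean identity
\begin{equation*}
\sum_{i=0}^{k+1}\binom{k+1}{i}B_{k+1-i}\,p_i(y)=(k+1)(y+1)^k,
\end{equation*}
whose $i=0$ term vanishes and whose $i=k+1$ term is $p_{k+1}(y)$; isolating these two boundary contributions is precisely what recovers the $\sum_{i=1}^{k}$ form above.

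This Bernoulli identity is the crux, and the step I expect to be the main obstacle. I would prove it from the generating function $\frac{te^t}{e^t-1}=\sum_{n\ge0}B_n\frac{t^n}{n!}$. Setting $T_n(z)=\sum_{i=0}^{n}\binom{n}{i}B_{n-i}z^i$, the Cauchy product of exponential generating series gives $\sum_{n\ge0}T_n(z)\frac{t^n}{n!}=\frac{t\,e^{(z+1)t}}{e^t-1}$. Subtracting the series at $z$ from that at $z+1$ collapses the denominator: $\sum_{n\ge0}\bigl(T_n(z+1)-T_n(z)\bigr)\frac{t^n}{n!}=t\,e^{(z+1)t}$, and comparing coefficients yields $T_n(z+1)-T_n(z)=n(z+1)^{n-1}$. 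Taking $n=k+1$ and $z=y$ gives exactly $\sum_{i=0}^{k+1}\binom{k+1}{i}B_{k+1-i}\bigl((y+1)^i-y^i\bigr)=(k+1)(y+1)^k$, which closes the argument for $(BR2)$.
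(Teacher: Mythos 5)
Your proof is correct, but it takes a genuinely different route from the paper's. The paper proves both relations by direct monomial computation: for (BR2) it expands the right-hand side into the monomials $y^jx^{\ell+1}$ via Proposition \ref{prop}(ii), reindexes the resulting double sums using the binomial identity of Proposition \ref{prop}(i), and invokes the recursive definition of Definition \ref{BN}, $\sum_{i=0}^{n}\binom{n+1}{i}B_i=n+1$, twice to collapse the Bernoulli sums before reassembling $y\W_{k,\ell}$. You instead put everything into the normal form $f(y)x^m$ using $xf(y)=f(y+1)x$, so that $\W_{k,\ell}=\bigl((y+1)^k-y^k\bigr)x^{\ell+1}$, and then use PBW to cancel the common power of $x$ and compare one-variable polynomials: (BR1) becomes the binomial theorem, and (BR2) becomes the forward-difference identity $\sum_{i=0}^{k+1}\binom{k+1}{i}B_{k+1-i}\bigl((y+1)^i-y^i\bigr)=(k+1)(y+1)^k$, i.e.\ the difference equation for the Bernoulli polynomials in the $B_1=1/2$ convention, which you derive from the generating function. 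Your route is shorter and more conceptual --- it isolates exactly why Bernoulli numbers appear in these relations. What the paper's longer calculation buys is self-containedness: it uses only the recursion of Definition \ref{BN} and elementary binomial manipulations, whereas you rely on the generating function, which the paper states only as an unproved Remark; since the equivalence of the two definitions is standard and your formal power series manipulation over $K[y]$ is legitimate, this is a dependence rather than a gap. Note also that you and the paper resolve the same typo in the statement identically: the right-hand side of (BR1) should read $\sum_{i=1}^{k}\binom{k}{i}\W_{i,\ell+1}$, which is what both your argument and the paper's proof actually establish.
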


\begin{proof}
At first, we show (BR1). 
Using Proposition \ref{prop} (ii), we can compute
\begin{align*}
x\W_{k, \ell} &= x(xy^{k} - y^{k}x)x^{\ell} \\
                     &= \{ x(xy^{k}) - (xy^{k})x \}x^{\ell} \\
                     &= \Biggl \{ x \Biggr( \sum_{i=0}^{k} \binom{\,k\,}{i}y^{i}x \Biggl ) 
                                       - \Biggl ( \sum_{i=0}^{k} \binom{\,k\,}{i}y^{i}x \Biggr )x \Biggr \}x^{\ell} \\
                     &= \Biggl \{ \sum_{i=0}^{k} \binom{\,k\,}{i} xy^{i} 
                                       - \sum_{i=0}^{k} \binom{\,k\,}{i} y^{i}x  \Biggr \} x^{\ell+1} \\
                     &= \sum_{i=0}^{k} \binom{\,k\,}{i} ( xy^{i} - y^{i}x) x^{\ell+1} \\
                     &= \sum_{i=0}^{k} \binom{\,k\,}{i} \W_{i, \ell +1}. \\
\end{align*}
Next, we show (BR2) by computing from (RHS) to (LHS). 
Using Proposition \ref{prop} (ii), we can compute
\begin{align*}
(RHS) &=\frac{k}{k+1} \W_{k+1, \ell} 
 - \frac{1}{k+1} \sum_{i=1}^{k} \binom{\,k+1\,}{i}B_{k+1-i}\W_{i, \ell} \\
&= \frac{k}{k+1} (xy^{k+1}-y^{k+1}x)x^{\ell} 
   - \frac{1}{k+1} \sum_{i=1}^{k} \binom{\,k+1\,}{i}B_{k+1-i}  (xy^{i}-y^{i}x)x^{\ell} \\
&= \frac{k}{k+1} \Biggl \{ \sum_{i=0}^{k+1}\binom{\, k+1 \,}{i} y^{i}x -y^{k+1}x \Biggr \}x^{\ell} \\
&\quad - \frac{1}{k+1} \sum_{i=1}^{k} \binom{\,k+1\,}{i}B_{k+1-i} 
                \Biggr \{ \sum_{j=0}^{i} \binom{\, i \,}{j} y^{j}x-y^{i}x \Biggl \} x^{\ell} \\
&= \frac{k}{k+1} \sum_{i=0}^{k}\binom{\, k+1 \,}{i} y^{i}x^{\ell+1} 
   - \frac{1}{k+1} \sum_{i=1}^{k} \binom{\,k+1\,}{i}B_{k+1-i}
              \sum_{j=0}^{i-1} \binom{\, i \,}{j} y^{j}x^{\ell+1}. \\
\intertext{We divide (RHS) into three terms such as $x$ and $y^{k}x$ and otherwise. Then we have}
(RHS)&= \frac{k}{k+1}\binom{\, k+1 \,}{k} y^{k}x^{\ell+1} 
   +\frac{k}{k+1} \sum_{i=1}^{k-1}\binom{\, k+1 \,}{i} y^{i}x^{\ell+1} 
   +\frac{k}{k+1} \binom{\, k+1 \,}{0} y^{0}x^{\ell+1} \\
&\quad - \frac{1}{k+1} \sum_{i=2}^{k} \binom{\,k+1\,}{i}B_{k+1-i}
              \sum_{j=1}^{i-1} \binom{\, i \,}{j} y^{j}x^{\ell+1}
  - \frac{1}{k+1} \sum_{i=1}^{k} \binom{\,k+1\,}{i}B_{k+1-i} \binom{\,i\,}{0} y^{0}x^{\ell+1}. \\
\intertext{Since we can replace $B_{k+1-i}$ with $B_{i}$ in the last term, we have }
(RHS)&= ky^{k}x^{\ell+1}  
   + \frac{k}{k+1} \sum_{i=1}^{k-1}\binom{\, k+1 \,}{i} y^{i}x^{\ell+1} 
   + \frac{k}{k+1}x^{\ell+1} \\
&\quad - \frac{1}{k+1} \sum_{i=2}^{k} \binom{\,k+1\,}{i}B_{k+1-i}
              \sum_{j=1}^{i-1} \binom{\, i \,}{j} y^{j}x^{\ell+1} 
   - \frac{1}{k+1} \sum_{i=1}^{k} \binom{\,k+1\,}{i}B_{i} x^{\ell+1} \\
&=  ky^{k}x^{\ell+1} 
   + \frac{k}{k+1} \sum_{i=1}^{k-1}\binom{\, k+1 \,}{i} y^{i}x^{\ell+1} 
   - \frac{1}{k+1} \sum_{i=2}^{k} \binom{\,k+1\,}{i}B_{k+1-i}
              \sum_{j=1}^{i-1} \binom{\, i \,}{j} y^{j}x^{\ell+1} \\
   &\quad + \frac{k}{k+1}x^{\ell+1}
   - \frac{1}{k+1} \sum_{i=0}^{k} \binom{\,k+1\,}{i}B_{i} x^{\ell+1} 
   + \frac{1}{k+1} \binom{\,k+1\,}{0}x^{\ell+1}.\\
\intertext{In the fifth term, using Definition \ref{BN}, we get}
(RHS)&=  ky^{k}x^{\ell+1} 
   + \frac{k}{k+1} \sum_{i=1}^{k-1}\binom{\, k+1 \,}{i} y^{i}x^{\ell+1} 
   - \frac{1}{k+1} \sum_{i=2}^{k} \binom{\,k+1\,}{i}B_{k+1-i}
              \sum_{j=1}^{i-1} \binom{\, i \,}{j} y^{j}x^{\ell+1} \\
   &\quad + \frac{k}{k+1}x^{\ell+1}
   - \frac{1}{k+1} (k+1) x^{\ell+1} 
   + \frac{1}{k+1} x^{\ell+1}.\\
&=  ky^{k}x^{\ell+1} 
   + \frac{k}{k+1} \sum_{i=1}^{k-1}\binom{\, k+1 \,}{i} y^{i}x^{\ell+1} 
   - \frac{1}{k+1} \sum_{i=2}^{k} \binom{\,k+1\,}{i}B_{k+1-i}
              \sum_{j=1}^{i-1} \binom{\, i \,}{j} y^{j}x^{\ell+1}. \\
\intertext{Replacing the index $i$ with $i+1$ in the third term, we obtain}
(RHS)&=  ky^{k}x^{\ell+1} 
   + \frac{k}{k+1} \sum_{i=1}^{k-1}\binom{\, k+1 \,}{i} y^{i}x^{\ell+1} 
   - \frac{1}{k+1} \sum_{i=1}^{k-1} \binom{\,k+1\,}{i+1}B_{k-i}
              \sum_{j=1}^{i} \binom{\, i+1 \,}{j} y^{j}x^{\ell+1}. \\
\intertext{Then, changing additive method in the third term, we obtain}
(RHS)&=  ky^{k}x^{\ell+1} 
   + \frac{k}{k+1} \sum_{i=1}^{k-1}\binom{\, k+1 \,}{i} y^{i}x^{\ell+1} 
   - \frac{1}{k+1} \sum_{j=1}^{k-1} \Biggl \{ 
        \sum_{i=j}^{k-1} \binom{\,k+1\,}{i+1} \binom{\, i+1 \,}{j} B_{k-i} \Biggr \} y^{j}x^{\ell+1}. \\
\intertext{In the third term, using Proposition \ref{prop} (i), we get}
(RHS)&=  ky^{k}x^{\ell+1} 
   + \frac{k}{k+1} \sum_{i=1}^{k-1}\binom{\, k+1 \,}{i} y^{i}x^{\ell+1} \\
&\quad - \frac{1}{k+1} \sum_{j=1}^{k-1} \Biggl \{ 
        \sum_{i=j}^{k-1} \binom{\,k+1\,}{j} \binom{\, k+1-j \,}{i+1-j} B_{k-i} \Biggr \} y^{j}x^{\ell+1}. \\
\intertext{Then, replacing the index $i+1-j$ with $i$ , we get}
(RHS)&=  ky^{k}x^{\ell+1} 
   + \frac{k}{k+1} \sum_{i=1}^{k-1}\binom{\, k+1 \,}{i} y^{i}x^{\ell+1} \\
&\quad - \frac{1}{k+1} \sum_{j=1}^{k-1} \Biggl \{ 
      \sum_{i=1}^{k-j} \binom{\,k+1\,}{j} \binom{\, k+1-j \,}{i} B_{k-(i+j-1)} \Biggr \} y^{j}x^{\ell+1} \\
&=  ky^{k}x^{\ell+1} 
   + \frac{k}{k+1} \sum_{i=1}^{k-1}\binom{\, k+1 \,}{i} y^{i}x^{\ell+1} \\
&\quad - \frac{1}{k+1} \sum_{j=1}^{k-1} \Biggl \{ 
      \sum_{i=1}^{k-j} \binom{\,k+1\,}{j} \binom{\, k-j+1 \,}{i} B_{k-j+1-i} \Biggr \} y^{j}x^{\ell+1}. \\
\intertext{Since we have $\binom{k-j+1}{i} = \binom{k-j+1}{k-j+1-i}$ , we can replace $B_{k-j+1-i}$ with $B_{i}$. Hence we have}
(RHS) &=  ky^{k}x^{\ell+1} 
   + \frac{k}{k+1} \sum_{i=1}^{k-1}\binom{\, k+1 \,}{i} y^{i}x^{\ell+1} \\
&\quad - \frac{1}{k+1} \sum_{j=1}^{k-1}  \binom{\,k+1\,}{j} \Biggl \{ 
      \sum_{i=1}^{k-j} \binom{\, k-j+1 \,}{i} B_{i} \Biggr \} y^{j}x^{\ell+1} \\
&=  ky^{k}x^{\ell+1} 
   + \frac{k}{k+1} \sum_{i=1}^{k-1}\binom{\, k+1 \,}{i} y^{i}x^{\ell+1} \\
&\quad - \frac{1}{k+1} \sum_{j=1}^{k-1}  \binom{\,k+1\,}{j} \Biggl \{ 
      \sum_{i=0}^{k-j} \binom{\, k-j+1 \,}{i} B_{i} - \binom{\, k+j-1 \,}{0}B_{0} \Biggr \} y^{j}x^{\ell+1}. \\
\intertext{In the third term, using Definition \ref{BN}, we get}
(RHS)&=  ky^{k}x^{\ell+1} 
   + \frac{k}{k+1} \sum_{i=1}^{k-1}\binom{\, k+1 \,}{i} y^{i}x^{\ell+1} 
   - \frac{1}{k+1} \sum_{j=1}^{k-1}  \binom{\,k+1\,}{j} \{ (k-j+1) - 1 \} y^{j}x^{\ell+1}. \\
\intertext{Then, replacing the index $j$ with $i$ in the third term, we have}
(RHS)&=  ky^{k}x^{\ell+1} 
   + \frac{k}{k+1} \sum_{i=1}^{k-1}\binom{\, k+1 \,}{i} y^{i}x^{\ell+1} 
   - \frac{1}{k+1} \sum_{i=1}^{k-1}  \binom{\,k+1\,}{i}(k-i) y^{i}x^{\ell+1} \\
&=  ky^{k}x^{\ell+1} 
   + \frac{1}{k+1} \sum_{i=1}^{k-1} k \, \binom{\, k+1 \,}{i} y^{i}x^{\ell+1} 
   - \frac{1}{k+1} \sum_{i=1}^{k-1}  \binom{\,k+1\,}{i}(k-i) y^{i}x^{\ell+1} \\
&=  ky^{k}x^{\ell+1} 
   + \frac{1}{k+1} \sum_{i=1}^{k-1} i \, \binom{\, k+1 \,}{i} y^{i}x^{\ell+1} \\
&=  \binom{k}{\,k-1\,}y^{k}x^{\ell+1} 
   + \sum_{i=1}^{k-1}  \binom{ k }{\,i-1\,} y^{i}x^{\ell+1} \\
&=  \sum_{i=1}^{k}  \binom{ k }{\,i-1\,} y^{i}x^{\ell+1}. \\
\intertext{Replacing the index $i$ with $i+1$, we get}
&=  \sum_{i=0}^{k-1}  \binom{ k }{\,i\,} y^{i+1}x^{\ell+1}. \\
\intertext{Regarding $y^{i+1}x^{\ell+1}$ as $y(y^{i}x^{\ell+1})$, we have }
&= y \sum_{i=0}^{k-1}  \binom{ k }{\,i\,} y^{i}x^{\ell+1} \\
&= y \Biggl ( \sum_{i=0}^{k}  \binom{ k }{\,i\,} y^{i}x^{\ell+1} - y^{k}x^{\ell+1} \Biggr ) \\
&= y \Biggl ( \sum_{i=0}^{k}  \binom{ k }{\,i\,} y^{i}x - y^{i}x \Biggr )x^{\ell} \\
&= y( xy^{k} - y^{k}x)x^{\ell}  \\
&= y \W_{k, \ell} = (LHS) . \\
\end{align*}
Therefore, we obtain desired results.
\end{proof}
\vspace{1\baselineskip}

From the theorem, we can get some corollaries. 
As has been mentioned in the introduction, $A$ is isomorphic to the universal enveloping algebra of a two-dimensional non-abelian Lie algebra. 
Thus, using PBW theorem, we can put 
\begin{equation*}
W= \bigoplus_{m \geq 1, n \geq 0} Kx^{m}y^{n}. 
\end{equation*}
Here we put
\begin{equation*} 
W'= \Biggl \{ \sum_{k,\ell} c_{k,\ell} \W_{k,\ell}  \Bigg |
   \begin{matrix}
       &k \geq 1, \ell \geq 0, c_{k,\ell} \in K, \\
       &\text{$c_{k,\ell}=0$ for all but finitely many pairs $(k,\ell)$}
   \end{matrix}
 \Biggr \} .
\end{equation*}
Then, the following statements hold.

\begin{Cor}
Notation is as above. 
Then, $W'$ is a two-sided ideal of $A$. 
In particular, $W=W'$. 
\end{Cor}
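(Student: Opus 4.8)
The plan is to prove the two inclusions $W'\subseteq W$ and $W\subseteq W'$ separately, which yields $W=W'$, and then to deduce the two-sided ideal property from the equality $W=W'$ together with closure under multiplication by $x$ and $y$. The organising tool will be the grading of $A$ by $x$-degree: the defining relation $xy-yx-x$ is homogeneous of $x$-degree $1$ (each monomial $xy$, $yx$, $x$ contains exactly one $x$), so $I$ is a homogeneous ideal of the $x$-graded free algebra $K[x,y]$ and $A=\bigoplus_{d\ge 0}A_d$ inherits this grading. Here $A_d$ is spanned by the PBW monomials $x^{m}y^{n}$ with $m=d$; by PBW applied to the reversed ordered basis $(y,x)$ it is equally spanned by $\{y^{i}x^{d}\}_{i\ge 0}$, since straightening $xy\mapsto yx+x$ never changes the number of $x$'s. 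With this notation $W=\bigoplus_{d\ge 1}A_d$.

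For $W'\subseteq W$ I would expand $\W_{k,\ell}$ in the basis $\{y^{i}x^{j}\}$. By Proposition \ref{prop} (ii) we have $xy^{k}-y^{k}x=\sum_{i=0}^{k-1}\binom{k}{i}y^{i}x$ (the top term $y^{k}x$ cancels), hence
\begin{equation*}
\W_{k,\ell}=\sum_{i=0}^{k-1}\binom{k}{i}y^{i}x^{\ell+1}\in A_{\ell+1}\subseteq W .
\end{equation*}
Since $W$ is a $K$-subspace, every finite combination of the $\W_{k,\ell}$ lies in $W$, giving $W'\subseteq W$.

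The main step, and the one where I expect the real work, is the reverse inclusion $W\subseteq W'$. Fixing $\ell$ and writing $d=\ell+1$, the displayed expansion exhibits $\W_{k,\ell}$ as a triangular combination of $y^{0}x^{d},y^{1}x^{d},\dots ,y^{k-1}x^{d}$ whose leading coefficient $\binom{k}{k-1}=k$ is nonzero (here the characteristic $0$ hypothesis enters). Starting from $\W_{1,\ell}=x^{d}$ and proceeding by induction on $i$, each $y^{i}x^{d}$ can be solved for as a finite $K$-combination of $\W_{1,\ell},\dots ,\W_{i+1,\ell}$, so $y^{i}x^{d}\in W'$ for all $i\ge 0$ and all $d\ge 1$. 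As $\{y^{i}x^{d}\}_{i\ge 0}$ spans $A_d$, this gives $A_d\subseteq W'$ for every $d\ge 1$, hence $W=\bigoplus_{d\ge 1}A_d\subseteq W'$. Combined with the previous paragraph this proves $W=W'$.

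Finally, since $W=W'$, it suffices to check that $W$ is a two-sided ideal, and this reduces to checking that the generators $x,y$ act on either side. Left multiplication is immediate from $x\cdot x^{m}y^{n}=x^{m+1}y^{n}$ and, using $yx^{m}=x^{m}(y-m)$ (a direct induction from $xy-yx=x$), from $y\cdot x^{m}y^{n}=x^{m}y^{n+1}-m\,x^{m}y^{n}$; both lie in $W$ because $m\ge 1$. For right multiplication, $x^{m}y^{n}\cdot y=x^{m}y^{n+1}\in W$, while $x^{m}y^{n}\cdot x\in W$ follows from Proposition \ref{prop} (iii) after moving the trailing $x$ past $y^{n}$. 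As $A$ is generated as a $K$-algebra by $x$ and $y$, these closures propagate to all of $A$, so $W=W'$ is a two-sided ideal. Alternatively, one sees that $W'$ is already a left ideal directly from (BR1) and (BR2) of Theorem \ref{thm}, which express $x\W_{k,\ell}$ and $y\W_{k,\ell}$ as $K$-combinations of the $\W_{i,\ell'}$.
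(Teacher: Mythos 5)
Your proof is correct, but it takes a genuinely different route from the paper's. The paper pivots through the principal left ideal $Ax$: from Theorem \ref{thm} it observes that $W'$ is a left ideal of $A$, deduces $W'=A\W_{1,0}=Ax$ (since $\W_{1,0}=xy-yx=x$ in $A$), obtains the right-ideal property from the one-line computation $(Ax)y=A(yx+x)=A(y+1)x\subseteq Ax$, and finally shows $W=Ax$ by writing $x^{m}y^{n}=x^{m-1}\bigl(\sum_{i=0}^{n}\binom{n}{i}y^{i}\bigr)x$. Your argument, by contrast, never uses the Bernoulli-type relations except as an aside at the end: you introduce the $x$-grading of $A$ (legitimate, since $xy-yx-x$ is $x$-homogeneous, so $I$ is a homogeneous ideal), and you exploit the triangular expansion $\W_{k,\ell}=\sum_{i=0}^{k-1}\binom{k}{i}y^{i}x^{\ell+1}$, whose leading coefficient $k$ is invertible in characteristic zero, to get both inclusions $W'\subseteq W$ and $W\subseteq W'$ by pure linear algebra; two-sidedness is then checked directly on the generators $x,y$ of $A$. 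What the paper's route buys is brevity and the structural identification $W=W'=Ax$ as a principal ideal, and it exhibits the corollary as an application of the main theorem, which is the expository point of the section. What your route buys is self-containedness -- only Proposition \ref{prop} is needed, not Theorem \ref{thm} -- and your triangularity observation does double duty: the same unitriangular-change-of-basis idea (nonzero diagonal entries $k$) is essentially what the paper uses separately to prove the subsequent corollary that $\{\W_{k,\ell}\}_{k\geq 1,\ell\geq 0}$ is a basis of $W$, so your method delivers that statement nearly for free.
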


\begin{proof}
From Theorem \ref{thm}, it is clear that $W'$ becomes a left ideal of $A$. 
Again using Theorem \ref{thm}, we can see 
\begin{equation*}
W'=A\W_{1,0}=Ax. 
\end{equation*}
Then, we have 
\begin{align*}
W'x &= (Ax)x \subseteq W' \\
\intertext{and}
W'y &= (Ax)y = A(xy) = A(yx+x) =A(y+1)x \subseteq W' .
\end{align*}
Hence, $W'$ is a two-sided ideal of $A$. 
Using Proposition \ref{prop}, we can obtain
\begin{equation*}
x^{m}y^{n}=x^{m-1}(xy^{n}) = x^{m-1}\Biggl (\sum_{i=0}^{n} \binom{n}{i}y^{i} \Biggr )x, 
\end{equation*}
which implies $W=Ax$ and $W=W'$. 
Therefore, we obtain the desired result.
\end{proof}
\vspace{1\baselineskip}

Next, we see that $ \{ \W_{k,\ell} \}_{k \geq 1, \ell \geq 0}$ is a basis of $W'$.

\begin{Cor}
Notation is as above. Then, $ \{ \W_{k,\ell} \}_{k \geq 1, \ell \geq 0}$ is a basis of $W$, 
that is, $W= \oplus_{k\geq1, \ell \geq 0} K\W_{k,\ell}$. 
\end{Cor}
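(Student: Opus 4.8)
The plan is to use the preceding corollary for the spanning half and reduce the statement to linear independence of the family $\{\W_{k,\ell}\}$. The previous corollary already gives $W=W'$, so the $\W_{k,\ell}$ span $W$; what remains is to check that no nontrivial finite $K$-linear combination of them vanishes. To detect this I would pass to an explicit PBW basis of $A$ and exhibit a triangular pattern.

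First I would rewrite each generator in the ``$y$-before-$x$'' PBW basis. Applying Proposition \ref{prop}\,(ii) to $xy^k=\sum_{i=0}^{k}\binom{k}{i}y^ix$ and cancelling the top term $y^kx$ against $-y^kx$, one obtains
\begin{equation*}
\W_{k,\ell}=(xy^k-y^kx)x^\ell=\sum_{i=0}^{k-1}\binom{k}{i}\,y^ix^{\ell+1}.
\end{equation*}
Since $W=Ax$ (from the previous corollary), the set $\{y^nx^m : n\geq 0,\ m\geq 1\}$ is a $K$-basis of $W$ — it is the ``$y$-first'' PBW basis of $A$ restricted to $Ax$ — so the display above is the coordinate vector of $\W_{k,\ell}$ in a genuine basis. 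The essential observation is that every term of $\W_{k,\ell}$ carries the same right factor $x^{\ell+1}$, so generators with different $\ell$ have disjoint supports in this basis.

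I would then argue as follows. Given a vanishing combination $\sum_{k,\ell}c_{k,\ell}\W_{k,\ell}=0$, group it by the value of $\ell$; by disjointness of supports across $\ell$, each partial sum $\sum_{k}c_{k,\ell}\W_{k,\ell}$ must vanish separately. For fixed $\ell$ the generator $\W_{k,\ell}$ has leading term $\binom{k}{k-1}y^{k-1}x^{\ell+1}=k\,y^{k-1}x^{\ell+1}$ together with only lower terms $y^ix^{\ell+1}$ for $i<k-1$; hence the transition from $(\W_{1,\ell},\W_{2,\ell},\dots)$ to $(x^{\ell+1},\,yx^{\ell+1},\,y^2x^{\ell+1},\dots)$ is triangular with nonzero diagonal entries $k$. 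This forces every $c_{k,\ell}=0$, yielding linear independence and, together with spanning, the claim $W=\bigoplus_{k\geq 1,\ell\geq 0}K\W_{k,\ell}$.

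The main difficulty is bookkeeping rather than conceptual: I must justify cleanly that $\{y^nx^m : n\geq 0,\ m\geq 1\}$ is a basis of $W$ (equivalently, that the ``$x$-first'' description $\bigoplus_{m\geq 1,n\geq 0}Kx^my^n$ and the ``$y$-first'' one coincide, both equalling $Ax$), and to track the two generator indices $(k,\ell)$ against the basis indices $(i,\ell+1)$ carefully enough that the triangularity is genuinely visible.
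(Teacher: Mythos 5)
Your proposal is correct and follows essentially the same route as the paper: the paper also takes spanning from the preceding corollary and proves independence by expanding $\W_{k,\ell}=\sum_{i=0}^{k-1}\binom{k}{i}\,y^{i}x^{\ell+1}$ via Proposition \ref{prop} (ii), then peeling off the top coefficient $c_{m,\ell}$ (the monomial $y^{m-1}x^{\ell+1}$ occurs only in the $k=m$ term, and PBW gives independence of the monomials $y^{i}x^{j}$), which is exactly your triangularity argument, iterated downward in $k$. The only difference is organizational: you fix $\ell$ first and exhibit the triangular transition matrix explicitly, while the paper runs the leading-term cancellation across all $\ell$ at once.
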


\begin{proof}

We show $\{ \W_{k,\ell} \}_{k\geq1, \ell \geq0}$ to be linearly independent. 
We assume 
\begin{equation*}
\sum_{\ell=1}^n \sum_{k=1}^m c_{k,\ell} (xy^{k}-y^{k}x)x^{\ell}=0 \quad( m,n < \infty)
\end{equation*}
with $c_{k,\ell} \in K$.
Then, from Proposition \ref{prop} (ii), we obtain
\begin{align*}
(LHS) &=\sum_{\ell=1}^n \sum_{k=1}^m c_{k,\ell} (xy^{k}-y^{k}x)x^{\ell} \\
        &= \sum_{\ell=1}^n \sum_{k=1}^m c_{k,\ell} \sum_{i=0}^{k-1} \binom{\,k\,}{i} y^{i}x^{\ell+1}. \\
\intertext{Hence, we have}
(LHS)    &= \sum_{\ell=1}^n  c_{m,\ell} \sum_{i=0}^{m-1} \binom{\,m\,}{i} y^{i}x^{\ell+1}
        +\sum_{\ell=1}^n \sum_{k=1}^{m-1} c_{k,\ell} \sum_{i=0}^{k-1} \binom{\,k\,}{i} y^{i}x^{\ell+1} \\              
    &= \sum_{\ell=1}^n  c_{m,\ell} \binom{m}{\,m-1\,} y^{m-1}x^{\ell+1}
        + \sum_{\ell=1}^n  c_{m,\ell} \sum_{i=0}^{m-2} \binom{\,m\,}{i} y^{i}x^{\ell+1}
        +\sum_{\ell=1}^n \sum_{k=1}^{m-1} c_{k,\ell} \sum_{i=0}^{k-1} \binom{\,k\,}{i} y^{i}x^{\ell+1}. \\              
\end{align*}
Then, the term $y^{m-1}x^{\ell+1}$ is appeared in the first term only. 
Using PBW theorem, we can get $c_{m, \ell} =0 $ for all $\ell$. 
Hence, the second term is vanished. That is, we have 
\begin{equation*}
(LHS) = \sum_{\ell=1}^n \sum_{k=1}^{m-1} c_{k,\ell} \sum_{i=0}^{k-1} \binom{\,k\,}{i} y^{i}x^{\ell+1}.               
\end{equation*}

Continuing this operation, we get $c_{k,\ell} = 0$ for all $k$.
Namely, we get $c_{k,\ell} = 0$ for all $k$ and $\ell$.
Hence, $\{ \W_{k,\ell} \}_{k\geq1, \ell \geq 0}$ is a basis of $W$.  
\end{proof}
\vspace{1\baselineskip}

Next, we show a variation of the Bernoulli-type relations. 

\begin{Cor}\label{cor3}
Let $A$ be as above.
We take 
\begin{equation*}
\W_{k} = xy^{k}-y^{k}x \in A \quad (k \geq 1).
\end{equation*}
Then, the following relations hold.
\begin{align*}
(S\!BR1) \quad y\W_{k} &= \frac{k}{k+1}\W_{k+1} -
                          \frac{1}{k+1} \sum_{i=1}^k \binom{\,k+1\,}{i}B_{k+1-i}\W_{i} \\
(S\!BR2) \quad \W_{k}y &= \frac{k}{k+1}\W_{k+1} -
                          \frac{1}{k+1} \sum_{i=1}^k (-1)^{k+1-i} \binom{\,k+1\,}{i}B_{k+1-i}\W_{i} \\
\end{align*}
\end{Cor}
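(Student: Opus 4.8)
The plan is to derive both relations cheaply from the work already done, rather than repeating the lengthy Bernoulli-number manipulation of Theorem \ref{thm}. The first observation is that $\W_k = \W_{k,0}$, so $(S\!BR1)$ is literally the specialization $\ell = 0$ of $(BR2)$: substituting $\ell = 0$ into $y\W_{k,\ell} = \frac{k}{k+1}\W_{k+1,\ell} - \frac{1}{k+1}\sum_{i=1}^k \binom{k+1}{i}B_{k+1-i}\W_{i,\ell}$ and reading $\W_{i,0}$ as $\W_i$ gives $(S\!BR1)$ with no further argument.

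For $(S\!BR2)$ I would not recompute $\W_k y$ directly, but instead relate right multiplication by $y$ to left multiplication by $y$ via a commutator. Using only the defining relation $xy = yx + x$, a short computation gives $\W_k y = (xy^k - y^k x)y = xy^{k+1} - y^{k+1}x - y^k x = \W_{k+1} - y^k x$, and symmetrically, from $yx = xy - x$, one gets $y\W_k = \W_{k+1} - xy^k$. Subtracting these yields $\W_k y - y\W_k = xy^k - y^k x = \W_k$, that is, $\W_k y = y\W_k + \W_k$. Substituting $(S\!BR1)$ for $y\W_k$ then expresses $\W_k y$ as $\frac{k}{k+1}\W_{k+1} - \frac{1}{k+1}\sum_{i=1}^k \binom{k+1}{i}B_{k+1-i}\W_i + \W_k$, and it remains only to reconcile this with the sign-twisted right-hand side claimed in $(S\!BR2)$.

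The main content, and the step requiring care, is this final reconciliation, which rests on the parity of the Bernoulli numbers. The difference between my expression and the target for $\W_k y$ equals $\W_k - \frac{1}{k+1}\sum_{i=1}^k \bigl[1 - (-1)^{k+1-i}\bigr]\binom{k+1}{i}B_{k+1-i}\W_i$, so the claim reduces to showing this vanishes. For $i < k$ the index $k+1-i$ is at least $2$: when it is even the bracket $1-(-1)^{k+1-i}$ is zero, and when it is odd and at least $3$ the factor $B_{k+1-i}$ is zero, so every term with $i<k$ drops. Only $i=k$ survives, contributing $\frac{1}{k+1}\cdot 2\cdot\binom{k+1}{k}B_1\,\W_k = \W_k$ since $B_1 = 1/2$ in our convention. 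This exactly cancels the leading $\W_k$, giving $(S\!BR2)$. The only subtlety to flag is that this cancellation depends essentially on the vanishing of odd Bernoulli numbers beyond $B_1$; it is precisely this feature that makes the alternating-sign sum agree with the plain sum up to the single correction term $\W_k$.
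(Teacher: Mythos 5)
Your proof is correct, and for (SBR2) it takes a genuinely different route from the paper. The paper proves (SBR1) exactly as you do, by setting $\ell=0$ in (BR2) of Theorem \ref{thm}; but for (SBR2) it essentially repeats the entire machinery of Theorem \ref{thm} in mirror image, expanding everything in the monomials $xy^{i}$ via Proposition \ref{prop}(iii) and grinding through the same Bernoulli-recursion manipulation with Definition \ref{BN} --- about two more pages of computation. You instead isolate the exact relation $\W_{k}y - y\W_{k} = \W_{k}$, which follows in two lines from $xy=yx+x$ (via $\W_{k}y=\W_{k+1}-y^{k}x$ and $y\W_{k}=\W_{k+1}-xy^{k}$), and then convert (SBR1) into (SBR2) by a parity argument: in the difference $\W_{k}-\frac{1}{k+1}\sum_{i=1}^{k}\bigl[1-(-1)^{k+1-i}\bigr]\binom{k+1}{i}B_{k+1-i}\W_{i}$, every term with $i<k$ dies (the bracket vanishes when $k+1-i$ is even, and $B_{k+1-i}=0$ when $k+1-i$ is odd and at least $3$), while the $i=k$ term contributes $\frac{1}{k+1}\cdot 2\cdot(k+1)\cdot\frac12\,\W_{k}=\W_{k}$, cancelling exactly. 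I checked each of these steps and they are sound; your argument is much shorter and, unlike the paper's, explains conceptually \emph{why} the two formulas differ only by the sign twist: the twist can only be felt at $i=k$, and there it produces precisely the commutator correction $\W_{k}$. The one thing you must supply is a justification of the fact your argument hinges on, namely $B_{3}=B_{5}=B_{7}=\cdots=0$: the paper never states or uses this (its computations need only the recursion of Definition \ref{BN} and binomial symmetry, which is why it can afford the long route), and the paper explicitly aims at self-containedness. A one-line fix is to note that $\sum_{n\ge 0}B_{n}t^{n}/n! - t/2 = \frac{t}{2}\,\frac{e^{t}+1}{e^{t}-1}$ is an even function of $t$, or alternatively to combine the paper's remark $\hat{B}_{n}=(-1)^{n}B_{n}$ with the standard fact that the two conventions agree for $n\neq 1$, which forces $B_{n}=-B_{n}$ for odd $n\ge 3$.
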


\begin{proof}
In Theorem \ref{thm}, if we take $\ell=0$, then (SBR1) holds. 

Next, we show (BR2) by computing from (RHS) to (LHS). 
Using Proposition \ref{prop} (iii), we can compute
\begin{align*}
(RHS) &=\frac{k}{k+1} \W_{k+1} 
 - \frac{1}{k+1} \sum_{i=1}^{k} (-1)^{k+1-i} \binom{\,k+1\,}{i}B_{k+1-i}\W_{i} \\
&= \frac{k}{k+1} (xy^{k+1}-y^{k+1}x) 
   - \frac{1}{k+1} \sum_{i=1}^{k} \binom{\,k+1\,}{i}B_{k+1-i}  (xy^{i}-y^{i}x). \\
&= \frac{k}{k+1} \Biggl ( xy^{k+1}- \sum_{i=0}^{k+1}(-1)^{k+1-i}\binom{\,k+1\,}{i}xy^{i} \Biggr )\\
&\quad - \frac{1}{k+1} \sum_{i=1}^{k} (-1)^{k+1-i} \binom{\,k+1\,}{i}B_{k+1-i} 
                \Biggr ( xy^{i}- \sum_{j=0}^{i}(-1)^{i-j}\binom{\,i\,}{j}xy^{j} \Biggl ) \\
&= \frac{k}{k+1} \sum_{i=0}^{k}(-1)^{k-i}\binom{\,k+1\,}{i}xy^{i} \\
 &\quad - \frac{1}{k+1} \sum_{i=1}^{k} (-1)^{k+1-i} \binom{\,k+1\,}{i}B_{k+1-i} 
                           \sum_{j=0}^{i-1}(-1)^{i-1-j}\binom{\,i\,}{j}xy^{j}.  \\
\intertext{We divide (RHS) into three terms such as $x$ and $y^{k}x$ and otherwise. Then we have}
(RHS) &= \frac{(-1)^{k-k}k}{k+1} \binom{\,k+1\,}{k}xy^k 
            +  \frac{k}{k+1} \sum_{i=1}^{k-1}(-1)^{k-i}\binom{\,k+1\,}{i}xy^{i}
            + \frac{(-1)^{k-0}}{k+1} \binom{\,k+1\,}{0}xy^{0} \\
          &\quad- \frac{1}{k+1} \sum_{i=2}^{k} (-1)^{k+1-i} \binom{\,k+1\,}{i}B_{k+1-i} 
                           \sum_{j=1}^{i-1}(-1)^{i-1-j}\binom{\,i\,}{j}xy^{j}  \\
          &\quad- \frac{1}{k+1} \sum_{i=1}^{k} (-1)^{k+1-i} \binom{\,k+1\,}{i}B_{k+1-i}
                                 (-1)^{i-1-0}\binom{\,i\,}{0}xy^{0} . \\
\intertext{Since we can replace $B_{k+1-i}$ with $B_{i}$ in the last term, we have}
(RHS) &= kxy^k 
            +  \frac{k}{k+1} \sum_{i=1}^{k-1}(-1)^{k-i}\binom{\,k+1\,}{i}xy^{i}
            + \frac{(-1)^{k}}{k+1}x \\
            &\quad- \frac{1}{k+1} \sum_{i=2}^{k} (-1)^{k+1-i} \binom{\,k+1\,}{i}B_{k+1-i} 
                           \sum_{j=1}^{i-1}(-1)^{i-1-j}\binom{\,i\,}{j}xy^{j}  
                - \frac{(-1)^{k}}{k+1} \sum_{i=1}^{k} \binom{\,k+1\,}{i}B_{i}x \\
          &= kxy^k 
            +  \frac{k}{k+1} \sum_{i=1}^{k-1}(-1)^{k-i}\binom{\,k+1\,}{i}xy^{i} \\
          &\quad - \frac{1}{k+1} \sum_{i=2}^{k} (-1)^{k+1-i} \binom{\,k+1\,}{i}B_{k+1-i} 
                           \sum_{j=1}^{i-1}(-1)^{i-1-j}\binom{\,i\,}{j}xy^{j}  \\
            &\quad+ \frac{(-1)^{k}}{k+1}x    
              - \frac{(-1)^{k}}{k+1} \sum_{i=0}^{k} \binom{\,k+1\,}{i}B_{i}x 
              - \frac{(-1)^{k}}{k+1} \binom{\,k+1\,}{0}B_{0}x .\\
\intertext{In the fifth term, using Definition \ref{BN}, we get}
(RHS)&= kxy^k 
            +  \frac{k}{k+1} \sum_{i=1}^{k-1}(-1)^{k-i}\binom{\,k+1\,}{i}xy^{i} \\
          & \quad  - \frac{1}{k+1} \sum_{i=2}^{k} (-1)^{k+1-i} \binom{\,k+1\,}{i}B_{k+1-i} 
                           \sum_{j=1}^{i-1}(-1)^{i-1-j}\binom{\,i\,}{j}xy^{j}  \\
            &\quad + \frac{(-1)^{k}}{k+1}x    
              - \frac{(-1)^{k}}{k+1} (k+1)x 
              - \frac{(-1)^{k}}{k+1}x \\ 
          &= kxy^k 
            +  \frac{k}{k+1} \sum_{i=1}^{k-1}(-1)^{k-i}\binom{\,k+1\,}{i}xy^{i} \\
            &\quad - \frac{1}{k+1} \sum_{i=2}^{k} (-1)^{k+1-i} \binom{\,k+1\,}{i}B_{k+1-i} 
                           \sum_{j=1}^{i-1}(-1)^{i-1-j}\binom{\,i\,}{j}xy^{j} . 
\intertext{Replacing the index $i$ with $i+1$ in the third term, we obtain}
(RHS) &= kxy^k 
            +  \frac{k}{k+1} \sum_{i=1}^{k-1}(-1)^{k-i}\binom{\,k+1\,}{i}xy^{i} \\
           &\quad - \frac{1}{k+1} \sum_{i=1}^{k-1} (-1)^{k-i} \binom{\,k+1\,}{i+1}B_{k-i} 
                           \sum_{j=1}^{i}(-1)^{i-j}\binom{\,i+1\,}{j}xy^{j} . 
\intertext{Then, changing additive method in the third term, we obtain}
(RHS) &= kxy^k 
            +  \frac{k}{k+1} \sum_{i=1}^{k-1}(-1)^{k-i}\binom{\,k+1\,}{i}xy^{i} \\
           &\quad - \frac{1}{k+1}\sum_{j=1}^{k-1} \Biggl \{ 
            \sum_{i=j}^{k-1} (-1)^{k-j} \binom{\,k+1\,}{i+1}\binom{\,i+1\,}{j}B_{k-i} \Biggr \} xy^{j} .
\intertext{In the third term, using Proposition \ref{prop} (i), we get}
(RHS) &= kxy^k 
            +  \frac{k}{k+1} \sum_{i=1}^{k-1}(-1)^{k-i}\binom{\,k+1\,}{i}xy^{i} \\
           &\quad - \frac{1}{k+1}\sum_{j=1}^{k-1} \Biggl \{ 
            \sum_{i=j}^{k-1} (-1)^{k-j} \binom{\,k+1\,}{j}\binom{\,k+1-j\,}{i+1-j}B_{k-i} \Biggr \} xy^{j} . 
\intertext{Then, replacing the index $i+1-j$ with $i$ , we get}
(RHS) &= kxy^k 
            +  \frac{k}{k+1} \sum_{i=1}^{k-1}(-1)^{k-i}\binom{\,k+1\,}{i}xy^{i} \\
           &\quad  - \frac{1}{k+1}\sum_{j=1}^{k-1} \Biggl \{ 
            \sum_{i=1}^{k-j} (-1)^{k-j} \binom{\,k+1\,}{j}\binom{\,k+1-j\,}{i}B_{k-(i+j-1)} \Biggr \} xy^{j} \\ 
          &= kxy^k 
            +  \frac{k}{k+1} \sum_{i=1}^{k-1}(-1)^{k-i}\binom{\,k+1\,}{i}xy^{i} \\
           &\quad - \frac{1}{k+1}\sum_{j=1}^{k-1} \Biggl \{ 
            \sum_{i=1}^{k-j} (-1)^{k-j} \binom{\,k+1\,}{j}\binom{\,k-j+1\,}{i}B_{k-j+1-i} \Biggr \} xy^{j}.  
\intertext{Since we have $\binom{k-j+1}{i} = \binom{k-j+1}{k-j+1-i}$ , we can replace $B_{k-j+1-i}$ with $B_{i}$. Hence we have}
(RHS) &= kxy^k 
           +  \frac{k}{k+1} \sum_{i=1}^{k-1}(-1)^{k-i}\binom{\,k+1\,}{i}xy^{i} \\
           &\quad - \frac{1}{k+1}\sum_{j=1}^{k-1} (-1)^{k-j} \binom{\,k+1\,}{j} \Biggl \{ 
                            \sum_{i=1}^{k-j} \binom{\,k-j+1\,}{i}B_{i} \Biggr \} xy^{j} \\  
           &= kxy^k 
           +  \frac{k}{k+1} \sum_{i=1}^{k-1}(-1)^{k-i}\binom{\,k+1\,}{i}xy^{i} \\
           &\quad - \frac{1}{k+1}\sum_{j=1}^{k-1} (-1)^{k-j} \binom{\,k+1\,}{j} \Biggl \{ 
             \sum_{i=0}^{k-j} \binom{\,k-j+1\,}{i}B_{i} - \binom{\,k-j+1\,}{0}B_{0} \Biggr \} xy^{j}.  
\intertext{In the third term, using Definition \ref{BN}, we get}
(RHS) &= kxy^k 
           +  \frac{k}{k+1} \sum_{i=1}^{k-1}(-1)^{k-i}\binom{\,k+1\,}{i}xy^{i}
            - \frac{1}{k+1}\sum_{j=1}^{k-1} (-1)^{k-j} \binom{\,k+1\,}{j} \{ (k-j+1) - 1 \} xy^{j}.  
\intertext{Then, replacing the index $j$ with $i$ in the third term, we have}
(RHS) &= kxy^k 
           +  \frac{k}{k+1} \sum_{i=1}^{k-1}(-1)^{k-i}\binom{\,k+1\,}{i}xy^{i}
            - \frac{1}{k+1}\sum_{i=1}^{k-1} (-1)^{k-i} \binom{\,k+1\,}{i}(k-i) xy^{j} \\  
           &= kxy^k 
           +  \frac{1}{k+1} \sum_{i=1}^{k-1}(-1)^{k-i} k \binom{\,k+1\,}{i}xy^{i}
            - \frac{1}{k+1}\sum_{i=1}^{k-1} (-1)^{k-i} (k-i) \binom{\,k+1\,}{i} xy^{j} \\  
           &= kxy^k 
           +  \frac{1}{k+1} \sum_{i=1}^{k-1}(-1)^{k-i} i \binom{\,k+1\,}{i}xy^{i} \\  
           &= \binom{k}{\,k-1\,}xy^k 
           + \sum_{i=1}^{k-1}(-1)^{k-i}  \binom{k}{\,i-1\,}xy^{i} \\  
           &=  \sum_{i=1}^{k}(-1)^{k-i}  \binom{k}{\,i-1\,}xy^{i} .  
\intertext{Replacing the index $i$ with $i+1$, we get}
(RHS) &=  \sum_{i=0}^{k-1}(-1)^{k+1-i}  \binom{k}{\,i\,}xy^{i+1} .  
\intertext{Regarding $y^{i+1}x^{\ell+1}$ as $y(y^{i}x^{\ell+1}$, we have }
(RHS) &= \Biggl ( \sum_{i=0}^{k-1}(-1)^{k+1-i}  \binom{k}{\,i\,}xy^{i} \Biggr ) y \\
          &= \Biggl ( xy^{k} - \sum_{i=0}^{k}(-1)^{k-i}  \binom{k}{\,i\,}xy^{i} \Biggr ) y \\    
          &=  ( xy^{k} - y^{k}x ) y \\    
          &= \W_{k} y = (LHS). \\    
\end{align*}
Therefore, we obtain desired results.
\end{proof}

We can easily see that $\W_{k}$ is $\W_{k,0}$ in Theorem \ref{thm}. 
We will investigate connections between Bernoulli-type relations and Lie algebras in the next section. 
Using Corollary \ref{cor3}, we will show the formulas with respect to Lie algebras. 
\vspace{1\baselineskip}

\section{A Connection between Bernoulli-type relations and Lie algebras}

In this section, we consider a connection between the Bernoulli-type relations and Lie algebras. 
In the introduction, we roughly reviewed the classification of three-dimensional Lie algebras. 
We let $\LA$ be a three-dimensional Lie algebra over a field $K$ of characteristic zero  and denote by $U(\LA)$ the universal enveloping algebra of $\LA$. 
Then we also explain that if $\LA$ is the type (c), we have a two-dimensional Lie subalgebra $L$ of $\LA$. 
Then, $L$ is a non-abelian two-dimensional Lie algebra. 
That is, we can write $L=Kx \oplus Ky$ with $[x,y]=x$. 

Now, we recall our settings in Section 3. 
We let $K[x,y]$ be a noncommutative polynomial ring generated by $x,y$ 
and define $I = \langle xy-yx-x \rangle$ to be the ideal of $K[x,y]$ generated by $xy-yx-x$. 
We let $A$ be $K[x,y]/I$. 
Then, if we denote by $U(L)$ the universal enveloping algebra of $L$, then we can see the following :
\begin{Rem}\label{cong}
Notation is as above. Then we have 
$ A \cong U(L)$. 
\hfill$\Box$
\end{Rem}

From Remark \ref{cong}, we can use the Bernoulli-type relations for $U(L)$. 
Conversely, it is the reason that we can use PBW theorem in $A$. 
Using the relations in Section 3, we will show the next formulas in $U(L)$. 

\begin{Prop}
Let $L$ be as above. Then in $U(L)$, we have
\begin{align*}
\Pk \quad yxy^k &= \frac{k}{k+1} \, xy^{k+1} + \frac{1}{k+1} \, x^{k+1}x \\
  & \quad -  \frac{1}{k+1} \sum_{i=1}^{k} \, \binom{\,k+1 \,}{i} B_{k+1-i} \, xy^{i}
           +  \frac{1}{k+1} \sum_{i=1}^{k}  \, \binom{\,k+1 \,}{i} B_{k+1-i} \, y^{i}x, \\
\Qk \quad y^kxy &= \frac{1}{k+1} \, xy^{k+1} + \frac{k}{k+1} \, y^{k+1}x \\
 & \quad + \frac{1}{k+1} \sum_{i=1}^{k} (-1)^{k+1-i} \, \binom{\, k+1 \,}{i} B_{k+1-i} \, xy^{i} \\
 & \quad - \frac{1}{k+1} \sum_{i=1}^{k} (-1)^{k+1-i} \, \binom{\, k+1 \,}{i} B_{k+1-i} \, y^{i}x.
\end{align*}
\end{Prop}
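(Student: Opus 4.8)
The plan is to deduce both formulas directly from Corollary \ref{cor3}, read inside $U(L)$ through the isomorphism $A \cong U(L)$ of Remark \ref{cong}. The guiding observation is that each desired left-hand side differs from $y\W_{k}$ (resp.\ from $\W_{k}y$) by exactly one monomial, so after substituting the relation $(S\!BR1)$ (resp.\ $(S\!BR2)$) and expanding $\W_{j}=xy^{j}-y^{j}x$, the statement should emerge from a single recombination of coefficients.

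First I would establish $\Pk$. Since $\W_{k}=xy^{k}-y^{k}x$, one has $y\W_{k}=yxy^{k}-y^{k+1}x$, hence $yxy^{k}=y\W_{k}+y^{k+1}x$. Substituting $(S\!BR1)$ and writing $\W_{k+1}=xy^{k+1}-y^{k+1}x$ and $\W_{i}=xy^{i}-y^{i}x$ gives
\begin{align*}
yxy^{k} &= \frac{k}{k+1}\bigl(xy^{k+1}-y^{k+1}x\bigr)
  - \frac{1}{k+1}\sum_{i=1}^{k}\binom{\,k+1\,}{i}B_{k+1-i}\bigl(xy^{i}-y^{i}x\bigr)
  + y^{k+1}x.
\end{align*}
Collecting the two $y^{k+1}x$ contributions through $1-\frac{k}{k+1}=\frac{1}{k+1}$ and distributing the sum over $xy^{i}$ and $y^{i}x$ then yields $\Pk$ at once.

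Next, for $\Qk$ I would use the mirror identity $\W_{k}y=(xy^{k}-y^{k}x)y=xy^{k+1}-y^{k}xy$, so that $y^{k}xy=xy^{k+1}-\W_{k}y$. Substituting $(S\!BR2)$ and again expanding each $\W_{j}$ produces
\begin{align*}
y^{k}xy &= xy^{k+1}-\frac{k}{k+1}\bigl(xy^{k+1}-y^{k+1}x\bigr)
  + \frac{1}{k+1}\sum_{i=1}^{k}(-1)^{k+1-i}\binom{\,k+1\,}{i}B_{k+1-i}\bigl(xy^{i}-y^{i}x\bigr).
\end{align*}
Combining the two $xy^{k+1}$ terms, once more via $1-\frac{k}{k+1}=\frac{1}{k+1}$, and distributing the sum gives $\Qk$.

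I do not anticipate a genuine obstacle: all the arithmetic content is already encoded in the signed Bernoulli coefficients of $(S\!BR1)$ and $(S\!BR2)$, which were the hard part of Corollary \ref{cor3} and are now available for use. The only points requiring care are the elementary bookkeeping that $yxy^{k}-y^{k+1}x$ is exactly $y\W_{k}$ (and dually that $xy^{k+1}-y^{k}xy=\W_{k}y$), and keeping track of the alternating factor $(-1)^{k+1-i}$, which is precisely what distinguishes $(S\!BR2)$ from $(S\!BR1)$ and what produces the sign pattern appearing in $\Qk$.
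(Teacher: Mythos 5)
Your proposal is correct and is exactly the paper's own argument: the paper's proof consists of the single line that $(S\!BR1)$ implies $\Pk$ and $(S\!BR2)$ implies $\Qk$, and your substitutions $yxy^{k}=y\W_{k}+y^{k+1}x$ and $y^{k}xy=xy^{k+1}-\W_{k}y$ together with the coefficient recombination $1-\frac{k}{k+1}=\frac{1}{k+1}$ are precisely the bookkeeping that line leaves implicit. Note also that your computation confirms the term $\frac{1}{k+1}x^{k+1}x$ in the stated $\Pk$ is a typo for $\frac{1}{k+1}y^{k+1}x$, consistent with the version of $\Pk$ given in the introduction.
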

\begin{proof}
Using the Corollary \ref{cor3}, we see that (SBR1) implies $\Pk$ and (SBR2) implies $\Qk$.
\end{proof}

\begin{Rem}
The above formulas $\Pk$ and $\Qk$ completely give the remaining terms of $\Ak$ and $\Bk$ in case of the type (c) if we replace $x$, $y$ by $e+g$, $f+g$ respectively. 
\end{Rem}

\begin{Rem}
Using the theory of linear algebras, 
we can establish two-dimensional Lie algebras as follows: 

Let $V$ be a vector space over $K$, and $End(V)$ be its endmorphism ring. 
Put $\Lg = End(V) \oplus V$, and we define 
\begin{equation*}
[f_1+\V_1, f_2+\V_2]=(f_1f_2-f_2f_1) + (f_1(\V_2)- f_2(\V_1))
\end{equation*}
for all $f_1, f_2 \in End(V)$ and $\V_1, \V_2 \in V$. 
Then $\Lg$ becomes a Lie algebra. 
Suppose that $f \in End(V)$ and $\V \in V$ satisfy $f(\V)=c\V$ for some $c \in K$. 
Put $\La = Kf \oplus K\V$ as a Lie subalgebra of $\Lg$. Then, we have 
\begin{equation*}
\begin{cases}
&\La \, \text{ is abelian}\quad \; \, ( \text{if }\; c =0 ), \\
&\La \cong L \quad \quad \quad \quad ( \text{if } \; c \neq 0 ). 
\end{cases}
\end{equation*}
\end{Rem}

\begin{Rem}
If $K$ is algebraically closed, then three-dimensional Lie algebras of type (d) corresponding to 
$\bigl ( \begin{smallmatrix} 1& \beta \\ 0&1 \end{smallmatrix}  \bigr )$ 
in Jacobson's book \cite{Ja}, on page 12, are not according to $\beta$. 
Hence, in this paper, we introduce the exact one type as (d)-($+$) at the introduction. 
\end{Rem}

\vspace{1\baselineskip}

\textbf{Acknowledgment  }

The author wish to express his gratitude to Professor J. Morita for his encouragement  and valuable advice.

\vspace{1\baselineskip}

\end{document}